\DeclareMathOperator{\Aut}{Aut} 
\DeclareMathOperator{\Mon}{Mon} 
\DeclareMathOperator{\N}{N} 
\DeclareMathOperator{\D}{D} 
\newtheorem{theorem}{Theorem}[section]
\newtheorem{lemma}[theorem]{Lemma}
\newtheorem{corollary}[theorem]{Corollary}
\def\cA{{\cal A}}
\def\cB{{\cal B}}
\def\cG{{\cal G}}
\def\cH{{\cal H}}
\def\cK{{\cal K}}
\def\cN{{\cal N}}
\def\cO{{\cal O}}
\def\cP{{\cal P}}
\def\cR{{\cal R}}
\def\cT{{\cal T}}
\def\cU{{\cal U}}
\def\im{\rm im}
\title{Constructions of bipartite and bipartite-regular hypermaps}
\author{Rui Duarte\footnote{This work was supported by {\it FEDER} founds through {\it COMPETE}--Operational Programme
Factors of Competitiveness (``Programa Operacional Factores de Competitividade'') and by Portuguese funds through the
{\it Center for Research and Development in Mathematics and Applications} (University of Aveiro) and the Portuguese
Foundation for Science and Technology (``FCT--Funda\c{c}\~{a}o para a Ci\^{e}ncia e a Tecnologia''), within project
PEst-C/MAT/UI4106/2011 with COMPETE number FCOMP-01-0124-FEDER-022690.} \\ Center for Research and Development in
Mathematics and Applications \\
Department of Mathematics \\ University of Aveiro \\ \texttt{rduarte@ua.pt}}
\date{\today}
\begin{document}

\maketitle

\abstract{A hypermap is bipartite if its set of flags can be divided into two parts $A$ and $B$ so that both $A$ and $B$ are the union of vertices, and consecutive vertices around an edge or a face are contained in alternate parts. A bipartite hypermap is bipartite-regular if its set of automorphisms is transitive on $A$ and on $B$.

In this paper we see some properties of the constructions of bipartite hypermaps described algebraically by Breda and
Duarte 
which generalize the construction induced by the Walsh representation of hypermaps. As an application we show that all surfaces have bipartite-regular hypermaps.

\noindent {\bf Keywords:} {hypermaps, bipartite hypermaps, operations on hypermaps} \\
\noindent {\bf AMS Subject Classification:} {05C10,05C25,05C30}
}

\section{Definitions and notation}

A \emph{hypermap} $\cH$ is a 4-tuple $(|\cH|, h_0, h_1, h_2)$, where $|\cH|$ is a non-empty finite set and $h_0$, $h_1$ and $h_2$ are involutory permutations on $|\cH|$ generating a transitive group $\Mon (\cH)$, the \emph{monodromy group} of $\cH$. The elements of $|\cH|$ are called \emph{flags} and the orbits of the dihedral subgroups $\langle h_1, h_2 \rangle$, $\langle h_2, h_0 \rangle$ and $\langle h_0, h_1 \rangle$ are called \emph{hypervertices}, \emph{hyperedges} and \emph{hyperfaces} of $\cH$. We use vertices, edges and faces instead of hypervertices, hyperedges and hyperfaces, for short. When $h_2 h_0$ has order 2, $\cH$ is a \emph{map}. Topologically, a hypermap resp. map is a cellular embedding of a connected hypergraph resp. graph into a closed connected surface, usually called the \emph{underlying surface} of the hypermap resp. map.

Given $\cG = (|\cG|, g_0, g_1, g_2)$ and $\cH = (|\cH|, h_0, h_1, h_2)$ hypermaps, a covering from $\cG$ to $\cH$ is a function $\psi: |\cG| \to |\cH|$ such that $g_i \psi = \psi h_i$ for all $i \in \{ 0, 1, 2 \}$. When $\psi$ is one-to-one, $\psi$ is called an \emph{isomorphism}. If there is a covering $\psi$ from $\cG$ to $\cH$, we say that $\cG$ \emph{covers} $\cH$ or that $\cH$ \emph{is covered by} $\cG$, and write $\cG \to \cH$; if $\psi$ is an isomorphism, we say that $\cG$ is isomorphic to $\cH$ or that $\cG$ and $\cH$ are isomorphic and write $\cG \cong \cH$. An \emph{automorphism} or \emph{symmetry} of $\cH$ is an isomorphism from $\cH$ to itself. The set of automorphisms of $\cH$ is denoted by $\Aut (\cH)$.

For any hypermap $\cH$ we have a transitive permutation representation $\Delta \to \Mon (\cH)$ of the free product
$$
\Delta = \langle R_0, R_1, R_2 \mid {R_0}^2 = {R_1}^2 = {R_2}^2 = 1 \rangle \cong C_2 * C_2 * C_2.
$$
The stabilizer $H$ of a flag under the action of $\Delta$, unique up to conjugacy, is called a \emph{hypermap subgroup} for $\cH$. It is well known that hypermap subgroups completely describe hypermaps.

Let $\Theta$ be a normal subgroup of $\Delta$. Following \cite{Breda2006}, $\cH$ is said to be \emph{$\Theta$-conservative} if $H \leq \Theta$, and \emph{$\Theta$-regular} if $H$ is normal in $\Theta$.
A $\Delta$-regular hypermap is best known as \emph{regular}. The \emph{even word subgroup} of $\Delta$ is the subgroup
$$
\Delta^+ = \langle R_1 R_2, R_2 R_0, R_0 R_1 \rangle
$$
which is one of the seven normal subgroups of index 2 in $\Delta$ \cite{BredaJones2000}. The others are
$$
\begin{array}{lll}
\Delta^{\hat 0} = \langle R_1, R_2 \rangle^\Delta, & \Delta^{\hat 1} = \langle R_2, R_0 \rangle^\Delta, & \Delta^{\hat 2} = \langle R_0, R_1 \rangle^\Delta, \\
\Delta^0 = \langle R_0, R_1 R_2 \rangle^\Delta, & \Delta^1 = \langle R_1, R_2 R_0 \rangle^\Delta, & \Delta^2 = \langle R_2, R_0 R_1 \rangle^\Delta.
\end{array}
$$
Usually $\Delta^+$-conservative and $\Delta^+$-regular hypermaps are called \emph{orientable} and \emph{orientably-regular}. An orientably-regular hypermap which is not regular is called \emph{chiral}. Following \cite{Breda2006,BredaDuarte2007}, a hypermap is \emph{bipartite} if it is $\Delta^{\hat 0}$-conservative, and \emph{bipartite-regular} if it is \mbox{$\Delta^{\hat 0}$-regular}.  We say that $\cH$ \emph{has no boundary} if no conjugate of $R_0$, $R_1$ or $R_2$ in $\Delta$ belongs to $H$. Because $R_0 \notin \Delta^{\hat 0}$, a bipartite hypermap has no boundary if and only if no conjugate of $R_1$, $R_2$, ${R_1}^{R_0}$ or ${R_2}^{R_0}$ in $\Delta^{\hat 0}$ belongs to $H$. A hypermap which is not orientable and has no boundary is \emph{non-orientable}. 
Topologically, a hypermap is orientable if its underlying surface is orientable, has no boundary if its underlying surface has no boundary, and is bipartite if we can divide its set of vertices in two parts in such a way that consecutive vertices around an edge or a face are in alternate parts.

Henceforth, we use $|\cH|$ to denote the number of flags of $\cH$ instead of its set of flags, for simplicity. The numbers of vertices, edges and faces of $\cH$ are denoted by $V (\cH)$, $E (\cH)$ and $F (\cH)$. The Euler-Poincar\'{e} characteristic of $\cH$ is the integer
$$
\chi (\cH) = V (\cH) + E (\cH) + F (\cH) - \frac{|\cH|}{2}
$$
and the \emph{genus} of $\cH$, $g (\cH)$, is $(2-\chi(\cH))/2$ or $2-\chi(\cH)$ depending on whether $\cH$ is orientable or not.

The \emph{valency} of a vertex, edge or face of a hypermap without boundary is half its number of flags. A hypermap $\cU$ is \emph{uniform} if there are positive integers $\ell$, $m$ and $n$ such that all vertices have valency $\ell$, all edges have valency $m$ and all faces have valency $n$. The triple $(\ell,m,n)$ is called the \emph{type} of $\cU$.
Regular hypermaps are uniform but, in general, bipartite-regular hypermaps are not uniform. However, given a biparte-regular hypermap $\cB$, there are positive integers $\ell_1$, $\ell_2$, $m$ and $n$ (not necessarily distinct) such that all edges have valency $m$, all faces have valency $n$ and consecutive vertices around an edge or a face have alternate valencies $\ell_1$ and $\ell_2$. The quadruple $(\ell_1, \ell_2; m; n)$ is a \emph{bipartite-type} of $\cB$ (the other is $(\ell_2, \ell_1; m; n)$, if $\ell_1 \neq \ell_2$).

For any $\sigma \in S_{\{ 0, 1, 2 \}}$ 
we have an automorphism $\overline{\sigma}$ of $\Delta$ defined by $R_i \overline{\sigma} = R_{i \sigma}$, for $i \in \{ 0, 1, 2 \}$. This automorphism induces an operation $\D_\sigma$ on hypermaps by assigning to $\cH$ the hypermap $\D_\sigma (\cH)$ with hypermap subgroup $H \overline{\sigma}$, called \emph{$\sigma$-dual} of $\cH$.

Throughout the last century many authors contributed to the classification of regular and chiral maps on surfaces of low genus. More recently, Conder \cite{ConderWWW} obtained lists of regular and chiral maps and hypermaps 
whose Euler-Poincar\'{e} characteristic is negative and greater than or equal to -200, up to isomorphism and duality, using the ``LowIndexNormalSubgroups'' routine in MAGMA \cite{Magma}. Bipartite-regular hypermaps are classified on the sphere \cite{BredaDuarte2007}, the projective plane, the torus and the double torus \cite{Rui2007}.

Let $\cH^+$ be the hypermap with hypermap subgroup $H^+ := H \cap \Delta^+$. Obviously, $\cH^+$ is isomorphic to $\cH$ if $\cH$ is orientable. When $\cH$ is not orientable, $\cH^+$ is a covering of $\cH$ with twice the number of flags usually called the \emph{orientable double cover} of $\cH$.

The \emph{covering core} $\cH_\Delta$ of $\cH$ and the \emph{closure cover} $\cH^\Delta$ of $\cH$ are the hypermaps whose hypermap subgroups are the core $H_\Delta$ of $H$ in $\Delta$ and the normal closure $H^\Delta$ of $H$ in $\Delta$, respectively. While the covering core of $\cH$ is the smallest regular hypermap covering $\cH$, the closure cover of $\cH$ is the largest regular hypermap covered by $\cH$.

When $\cH$ is a $\Theta$-conservative resp. $\Theta$-regular hypermap, $\cH^+$, $\cH^\Delta$ and $\cH_\Delta$ are also $\Theta$-conservative resp. $\Theta$-regular. In fact, both $\cH$ and $\cH^\Delta$ are $\Theta$-conservative resp. \mbox{$\Theta$-regular} or neither is.

\begin{lemma} \label{odc_cc_cc}
For every hypermap $\cH$, $(\cH^+)^\Delta \to (\cH^\Delta)^+$ and $(\cH^+)_\Delta \cong (\cH_\Delta)^+$.
\end{lemma}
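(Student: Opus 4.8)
The plan is to translate both assertions into statements about hypermap subgroups and then to exploit the fact, recorded in the excerpt, that $\Delta^+$ is normal in $\Delta$. Recall that a covering $\cG \to \cH$ exists precisely when a hypermap subgroup of $\cG$ is contained in a hypermap subgroup of $\cH$, and that $\cG \cong \cH$ precisely when their hypermap subgroups are conjugate in $\Delta$. So I would fix a hypermap subgroup $H$ for $\cH$ and write down, straight from the definitions, that the hypermaps $(\cH^+)^\Delta$, $(\cH^\Delta)^+$, $(\cH^+)_\Delta$ and $(\cH_\Delta)^+$ have hypermap subgroups $(H \cap \Delta^+)^\Delta$, $H^\Delta \cap \Delta^+$, $(H \cap \Delta^+)_\Delta$ and $H_\Delta \cap \Delta^+$ respectively, where $K^\Delta$ and $K_\Delta$ denote the normal closure and the core of $K$ in $\Delta$.

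For the isomorphism $(\cH^+)_\Delta \cong (\cH_\Delta)^+$ I would show that the first two of the relevant subgroups are in fact \emph{equal}. Since the core is an intersection of conjugates,
\begin{align*}
(H \cap \Delta^+)_\Delta = \bigcap_{g \in \Delta} (H \cap \Delta^+)^g = \bigcap_{g \in \Delta} \bigl( H^g \cap (\Delta^+)^g \bigr) = \Bigl( \bigcap_{g \in \Delta} H^g \Bigr) \cap \Delta^+ = H_\Delta \cap \Delta^+,
\end{align*}
where the third equality uses that $\Delta^+$, being normal in $\Delta$, satisfies $(\Delta^+)^g = \Delta^+$ for every $g \in \Delta$. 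Thus the two hypermap subgroups coincide, and the hypermaps are isomorphic.

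For the covering $(\cH^+)^\Delta \to (\cH^\Delta)^+$ I would establish the one-sided inclusion $(H \cap \Delta^+)^\Delta \le H^\Delta \cap \Delta^+$. Monotonicity of the normal-closure operation applied to $H \cap \Delta^+ \le H$ gives $(H \cap \Delta^+)^\Delta \le H^\Delta$; and since $H \cap \Delta^+ \le \Delta^+$ with $\Delta^+$ normal, the normal closure $(H \cap \Delta^+)^\Delta$ is itself contained in $\Delta^+$. Combining the two containments yields the desired inclusion, and hence the covering.

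The computations are short; the real content is recognising the asymmetry between the two statements, which is also where I would be most careful. The core interacts perfectly with the intersection against $\Delta^+$ because both are ``meet'' operations and $\Delta^+$ is conjugation-invariant, forcing an honest equality and thus an isomorphism. The normal closure, being a ``join'', only satisfies the inclusion $(H \cap \Delta^+)^\Delta \le H^\Delta \cap \Delta^+$: the right-hand side may contain even words that arise as conjugates of \emph{odd} elements of $H$ and so are not captured by closing up $H \cap \Delta^+$ alone. Consequently one should expect only a covering here, not an isomorphism, and the main pitfall to avoid is claiming equality in this second case.
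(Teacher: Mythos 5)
Your proof is correct: the paper states Lemma \ref{odc_cc_cc} without giving a proof, and your subgroup-theoretic argument --- the equality $(H \cap \Delta^+)_\Delta = H_\Delta \cap \Delta^+$ obtained from conjugation-invariance of the normal subgroup $\Delta^+$, together with the one-sided inclusion $(H \cap \Delta^+)^\Delta \leq H^\Delta \cap \Delta^+$ --- is exactly the natural argument in the paper's framework of hypermap subgroups. Your closing caution that the second relation is only a covering, never an equality in general, is confirmed later in the paper, where the Klein-bottle hypermap $\cK$ of Section \ref{section_other} has $(\cK^+)^\Delta$ and $(\cK^\Delta)^+$ non-isomorphic.
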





More generally, we have the following coverings:

$$
\xymatrix{
(\cH_\Delta)^+ \cong (\cH^+)_\Delta \ar[r] \ar[d] & \cH^+ \ar[r] \ar[d] & (\cH^+)^\Delta \ar[r] & (\cH^\Delta)^+ \ar[d] \\
\cH_\Delta \ar[r] & \cH \ar[rr] & & \cH^\Delta
}
$$

When $\cH$ is $\Theta$-regular, for some normal subgroup $\Theta$ of index 2 in $\Delta$, but not orientable,
$|(\cH^+)^\Delta|$ is equal to $2|(\cH^\Delta)^+|$ or $|(\cH^\Delta)^+|$ depending on whether $\cH_\Delta$ is orientable or
not. Non-orientable bipartite-regular hypermaps whose covering core is orientable can be found on the projective plane
(Chapter 3 of \cite{Rui2007}) and on the Klein bottle (see Section \ref{section_other}).

\section{Constructions of bipartite hypermaps} \label{section_constructions}

In this section we see some properties of the constructions of bipartite hypermaps described in \cite{BredaDuarte2007,Rui2007}.
These constructions are induced from group epimorphisms from $\Delta^{\hat 0}$ to $\Delta$ and generalize the
correspondence between hypermaps and bipartite maps presented by Walsh in \cite{Walsh1975}.
Unless otherwise stated, we shall assume throughout this paper that $\varphi$ is an epimorphism from $\Delta^{\hat 0}$ to $\Delta$.

Let $\cH$ be a hypermap with hypermap subgroup $H$. For every conjugate $H^g$ of $H$ in $\Delta$, $H^g \varphi^{-1}$ is a
conjugate of $H \varphi^{-1}$ in $\Delta^{\hat 0}$ and hence in $\Delta$. In other words, $H^g \varphi^{-1}$ and $H
\varphi^{-1}$ are hypermap subgroups for the same hypermap. Because of this, $\varphi$ induces an operator $\varphi^*$
between the class of hypermaps and the class of bipartite hypermaps. By abuse of language we speak of $\varphi (\cH)$
meaning the hypermap $\varphi^* (\cH)$ with hypermap subgroup $H \varphi^{-1}$. In \cite{BredaDuarte2007,Rui2007}, the
hypermap $\varphi (\cH)$ was denoted by $\cH^{\varphi^{-1}}$. The following result lists some basic properties of these
constructions of bipartite hypermaps.

\begin{lemma} \label{lemma_varphi} Let $\cH$ and $\cG$ be hypermaps and $\Theta$ a normal subgroup of $\Delta$. Then:
\begin{enumerate}
\item $\varphi (\cH)$ has twice the number of flags of $\cH$.
\item $\varphi (\cH)$ is bipartite-regular if and only if $\cH$ is regular. \\
Moreover, $\varphi (\cH)$ is $\Theta \varphi^{-1}$-regular if and only if $\cH$ is $\Theta$-regular.
\item If $\cH$ covers $\cG$, then $\varphi (\cH)$ covers $\varphi (\cG)$.
\item If $\cH$ is isomorphic to $\cG$, then $\varphi (\cH)$ is isomorphic to $\varphi (\cG)$.
\item If $\varphi (\cH)$ is regular, then $\cH$ is regular. \\
More generally, if $\varphi (\cH)$ is $\Theta$-regular, then $\cH$ is $(\Theta \cap \Delta^{\hat 0}) \varphi$-regular.
\item $\Aut (\cH)$ is either $\Aut (\varphi (\cH))$ or a subgroup of $\Aut (\varphi (\cH))$ of index $2$.
\end{enumerate}
\end{lemma}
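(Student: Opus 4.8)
The plan is to work entirely at the level of hypermap subgroups, using the standard identification $\Aut(\cH) \cong N_\Delta(H)/H$, where $N_\Delta(H)$ denotes the normalizer of $H$ in $\Delta$. Applying the same identification to $\varphi(\cH)$, whose hypermap subgroup is the full preimage $H\varphi^{-1}$, gives $\Aut(\varphi(\cH)) \cong N_\Delta(H\varphi^{-1})/(H\varphi^{-1})$. The task thus reduces to comparing these two quotients.

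First I would express $\Aut(\cH)$ in terms of the normalizer of $H\varphi^{-1}$ computed \emph{inside} $\Delta^{\hat 0}$. Since $\varphi$ is surjective and $\ker\varphi \leq H\varphi^{-1}$, the correspondence theorem gives, for $x \in \Delta^{\hat 0}$, that $x$ normalizes $H\varphi^{-1}$ if and only if $x\varphi$ normalizes $H$; hence $N_{\Delta^{\hat 0}}(H\varphi^{-1}) = (N_\Delta(H))\varphi^{-1}$. Restricting $\varphi$ to this normalizer yields a surjection onto $N_\Delta(H)$ with kernel $\ker\varphi$, carrying $H\varphi^{-1}$ onto $H$; passing to quotients produces an isomorphism
$$
N_{\Delta^{\hat 0}}(H\varphi^{-1})/(H\varphi^{-1}) \cong N_\Delta(H)/H \cong \Aut(\cH).
$$

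It then remains to compare $N_{\Delta^{\hat 0}}(H\varphi^{-1})$ with $N_\Delta(H\varphi^{-1})$. Because $N_{\Delta^{\hat 0}}(H\varphi^{-1}) = N_\Delta(H\varphi^{-1}) \cap \Delta^{\hat 0}$ and $\Delta^{\hat 0}$ has index $2$ in $\Delta$, this intersection has index $1$ or $2$ in $N_\Delta(H\varphi^{-1})$. Both normalizers contain $H\varphi^{-1}$, so the same index $1$ or $2$ persists after quotienting by $H\varphi^{-1}$; combined with the isomorphism above, this shows that $\Aut(\cH)$ is either all of $\Aut(\varphi(\cH))$ or a subgroup of index $2$. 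The only delicate point is the first step, namely verifying that the normalizer of the preimage $H\varphi^{-1}$ taken within $\Delta^{\hat 0}$ is exactly the preimage of $N_\Delta(H)$, so that $\Aut(\cH)$ is recovered from data living inside $\Delta^{\hat 0}$; the index dichotomy is then forced purely by $[\Delta:\Delta^{\hat 0}]=2$. One could additionally observe that the two cases correspond to whether or not $N_\Delta(H\varphi^{-1})$ is contained in $\Delta^{\hat 0}$, which connects to whether $\varphi(\cH)$ admits an automorphism interchanging the two parts of the bipartition.
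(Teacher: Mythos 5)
There is a genuine gap: the lemma makes six assertions, and your proposal proves only the last one. Nothing in your argument addresses part 1 (that $\varphi(\cH)$ has twice as many flags as $\cH$), part 2 (that $\varphi(\cH)$ is $\Theta\varphi^{-1}$-regular iff $\cH$ is $\Theta$-regular), parts 3 and 4 (preservation of coverings and isomorphisms), or part 5 (the converse regularity statement). These do not follow from your normalizer computation and require their own (short) arguments: part 1 needs the index computation $(\Delta : H\varphi^{-1}) = (\Delta : \Delta^{\hat 0})(\Delta^{\hat 0} : H\varphi^{-1}) = 2(\Delta : H)$, valid because $\varphi$ is surjective; parts 2 and 5 need the correspondence-theorem fact that $H\varphi^{-1} \lhd \Theta\varphi^{-1}$ if and only if $H \lhd \Theta$ (and, for part 5, that normality of $H\varphi^{-1}$ in $\Theta$ implies normality in $\Theta \cap \Delta^{\hat 0}$, whence $H = H\varphi^{-1}\varphi$ is normal in $(\Theta \cap \Delta^{\hat 0})\varphi$); part 3 needs the conjugation identity $(G^{t\varphi})\varphi^{-1} = (G\varphi^{-1})^t$ applied to an inclusion $H \subseteq G^s$ with $s = t\varphi$, $t \in \Delta^{\hat 0}$, and part 4 is the same with equality in place of inclusion. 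Since these five items constitute most of the lemma, the proposal as written is incomplete.

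The part you do prove, item 6, is correct and is essentially the paper's own argument. The paper likewise establishes $(\N_\Delta(H))\varphi^{-1} = \N_\Delta(H\varphi^{-1}) \cap \Delta^{\hat 0}$ (via the equivalence $H = H^{g\varphi} \Leftrightarrow H\varphi^{-1} = (H\varphi^{-1})^g$ for $g \in \Delta^{\hat 0}$, which is your correspondence-theorem step), deduces $\Aut(\cH) \cong (\N_\Delta(H\varphi^{-1}) \cap \Delta^{\hat 0})/H\varphi^{-1}$, and concludes with the same dichotomy: index $1$ or $2$ according to whether $\N_\Delta(H\varphi^{-1})$ is contained in $\Delta^{\hat 0}$ or not. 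Your closing observation that the two cases correspond to the existence of an automorphism of $\varphi(\cH)$ swapping the two parts of the bipartition is a nice touch, but it does not compensate for the missing five items.
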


\begin{proof}
Let $H$ and $G$ be hypermap subgroups of $\cH$ and $\cG$.

\noindent 1. Because $\varphi$ is an epimorphism, $(\Delta^{\hat 0} : H \varphi^{-1}) = (\Delta : H)$ and hence
$$
|\varphi (\cH)| = (\Delta : H \varphi^{-1}) = (\Delta : \Delta^{\hat 0}) (\Delta^{\hat 0} : H
\varphi^{-1}) = 2 (\Delta : H) = 2 |\cH|.
$$
\noindent  2. Since $\varphi$ is an epimorphism, $H \varphi^{-1} \lhd \Theta \varphi^{-1}$ if and only if $H \lhd \Theta$. 

\noindent 3. Let $H \subseteq G^s$ and $s = t \varphi$, for some $t \in \Delta^{\hat 0}$, then $H \varphi^{-1} \subseteq G^s \varphi^{-1} = G^{t \varphi} \varphi^{-1} = (G \varphi^{-1})^t$.

\noindent 4. Follows from the proof of 3 by replacing ``$\subseteq$" with ``$=$".

\noindent 5. When $H \varphi^{-1}$ is normal in $\Theta$, it is also normal in $\Theta \cap \Delta^{\hat 0}$, and so $H = H \varphi^{-1} \varphi$ is normal in $(\Theta \cap \Delta^{\hat 0}) \varphi$.


\noindent 6. Given $g \in \Delta^{\hat 0}$,
$
H = H^{g \varphi} \Leftrightarrow H \varphi^{-1} = (H^{g \varphi})\varphi^{-1} \Leftrightarrow H \varphi^{-1} = (H \varphi^{-1})^g
$
since $\varphi$ is an epimorphism and $(H^{g \varphi})\varphi^{-1} = (H \varphi^{-1})^g$ (see 1 of Lemma 8 of \cite{BredaDuarte2007}, for instance). Consequently,
$
(\N_\Delta (H)) \varphi^{-1} = \N_\Delta (H \varphi^{-1}) \cap \Delta^{\hat 0},
$
and so
$$
\Aut (\cH) \cong \N_\Delta (H) / H \cong (N_\Delta (H)) \varphi^{-1} / H \varphi^{-1} \cong (\N_\Delta (H \varphi^{-1}) \cap \Delta^{\hat 0}) / H \varphi^{-1}.
$$
Because of this, $\Aut (\cH)$ is either $\Aut (\varphi (\cH))$ or a subgroup of $\Aut (\varphi (\cH))$ of index 2 depending on whether $\N_\Delta (H \varphi^{-1})$ is contained in $\Delta^{\hat 0}$ or not.
\end{proof}

The first 4 properties of Lemma \ref{lemma_varphi} are just a reformulation of Lemma 1.6.1 of \cite{Rui2007}, 2 generalizes Theorems 10 and 12 of \cite{BredaDuarte2007}, and 5 generalizes Theorem 10 of \cite{CornSing1988}.

When $\cB$ is a bipartite hypermap such that $\cB \cong \varphi (\cG)$, for some hypermap $\cG$, we say that $\cB$ is constructed using $\varphi$. We denote the class of all bipartite hypermaps constructed using $\varphi$ by $\im \varphi$.
The following result gives us a condition for seeing if a hypermap belongs to $\im \varphi$ or not.

\begin{lemma} \label{constructed_by_phi}
Let $\cB$ be a bipartite hypermap with hypermap subgroup $B$. Then $\cB \in \im \varphi$ if and only if $\ker \varphi \subseteq B^g$, for some $g \in \Delta$. Because $\ker \varphi$ is a normal subgroup of $\Delta^{\hat 0}$, $\cB \in \im \varphi$ if and only if $\ker \varphi \subseteq B$ or $(\ker \varphi)^{R_0} \subseteq B$.
\end{lemma}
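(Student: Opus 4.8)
The plan is to translate membership in $\im \varphi$ into a purely group-theoretic statement about $\Delta$-conjugacy of hypermap subgroups, and then to exploit the fact that the subgroups of $\Delta^{\hat 0}$ arising as preimages $G \varphi^{-1}$ are exactly those containing $\ker \varphi$. First I would record that isomorphic hypermaps correspond to $\Delta$-conjugate hypermap subgroups (part 4 of Lemma \ref{lemma_varphi}), and that $\varphi (\cG)$ has hypermap subgroup $G \varphi^{-1}$; hence $\cB \in \im \varphi$ holds precisely when $B$ is conjugate in $\Delta$ to $G \varphi^{-1}$ for some subgroup $G \leq \Delta$. Next I would invoke the Galois-type correspondence induced by the epimorphism $\varphi$: a subgroup $K \leq \Delta^{\hat 0}$ has the form $G \varphi^{-1}$ for some $G \leq \Delta$ if and only if $\ker \varphi \leq K$, in which case $G = K \varphi$ and $(K \varphi) \varphi^{-1} = K$. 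Thus $\cB \in \im \varphi$ if and only if $B$ is $\Delta$-conjugate to a subgroup $K$ of $\Delta^{\hat 0}$ with $\ker \varphi \leq K$.

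For the first equivalence I would use that $\cB$ is bipartite, so $B \leq \Delta^{\hat 0}$, together with the normality of $\Delta^{\hat 0}$ in $\Delta$. Normality guarantees that every conjugate $B^g$ again lies in $\Delta^{\hat 0}$, so $B^g$ is automatically a candidate for the form $G \varphi^{-1}$ and the only remaining requirement is $\ker \varphi \subseteq B^g$. Concretely: if $\cB \in \im \varphi$, say $B = K^g$ with $\ker \varphi \leq K \leq \Delta^{\hat 0}$, then $\ker \varphi \subseteq K = B^{g^{-1}}$; conversely, if $\ker \varphi \subseteq B^g$ for some $g$, then $B^g \leq \Delta^{\hat 0}$ and $\ker \varphi \leq B^g$, so $B^g = ((B^g) \varphi) \varphi^{-1}$ exhibits $\cB$ as $\varphi (\cG)$ with $G = (B^g) \varphi$ (a genuine hypermap subgroup, since it has finite index by the index computation of part 1). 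This settles $\cB \in \im \varphi$ if and only if $\ker \varphi \subseteq B^g$ for some $g \in \Delta$.

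For the refinement I would reduce the family of conjugates $\{ (\ker \varphi)^{g^{-1}} : g \in \Delta \}$ to just two representatives. Rewriting $\ker \varphi \subseteq B^g$ as $(\ker \varphi)^{g^{-1}} \subseteq B$, and using $\Delta = \Delta^{\hat 0} \sqcup \Delta^{\hat 0} R_0$ (index $2$, with $R_0 \notin \Delta^{\hat 0}$), I would write $g^{-1} = h$ or $g^{-1} = h R_0$ with $h \in \Delta^{\hat 0}$. Since $\ker \varphi \lhd \Delta^{\hat 0}$, conjugation by $h$ fixes $\ker \varphi$, so $(\ker \varphi)^{g^{-1}}$ equals $\ker \varphi$ in the first case and $(\ker \varphi)^{R_0}$ in the second. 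This yields $\cB \in \im \varphi$ if and only if $\ker \varphi \subseteq B$ or $(\ker \varphi)^{R_0} \subseteq B$.

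The proof is short, so there is no deep obstacle; the one point demanding care is keeping the two distinct normalities straight. The normality of $\Delta^{\hat 0}$ \emph{in $\Delta$} is what keeps every conjugate of $B$ inside $\Delta^{\hat 0}$, guaranteeing the preimage form is available once the kernel is contained; whereas the normality of $\ker \varphi$ \emph{only in $\Delta^{\hat 0}$} (not in $\Delta$) is exactly what collapses the $\Delta$-conjugates of $\ker \varphi$ to the two representatives $\ker \varphi$ and $(\ker \varphi)^{R_0}$. Handling the conjugation convention consistently across these two reductions is the whole content of the argument.
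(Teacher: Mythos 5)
Your proof is correct and follows essentially the same route as the paper: both directions hinge on the identity $(B^g)\varphi\varphi^{-1} = B^g \cdot \ker\varphi = B^g$ once $\ker\varphi \subseteq B^g$ (with normality of $\Delta^{\hat 0}$ in $\Delta$ keeping $B^g$ inside $\Delta^{\hat 0}$), and the reduction to the two representatives $\ker\varphi$ and $(\ker\varphi)^{R_0}$ uses normality of $\ker\varphi$ in $\Delta^{\hat 0}$ together with the index-$2$ coset decomposition of $\Delta$, exactly as in the paper's final remark. Your explicit framing as a Galois-type correspondence and the note on finite index are harmless elaborations of the same argument.
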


\begin{proof}
If $\cB \cong \varphi (\cH)$, for some hypermap $\cK$ with hypermap subgroup $K$, then there is $g \in \Delta$ such that $B^g = K \varphi^{-1}$ and hence $\ker \varphi = 1 \varphi^{-1} \subseteq K \varphi^{-1} = B^g$.

Conversely, assume that $\ker \varphi \subseteq B^g$, for some $g \in \Delta$. Then $B^g \subseteq (\Delta^{\hat 0})^g = \Delta^{\hat 0}$, $B^g = B^g \cdot \ker \varphi = (B^g) \varphi \varphi^{-1}$ and so $\cB \cong \varphi (\cH)$, where $\cH$ is the hypermap with hypermap subgroup $H := (B^g) \varphi$.


Finally, we remark that $\cB \in \im \varphi$ is equivalent to $(\ker \varphi)^{g^{-1}} \subseteq B$ for some $g \in \Delta$, and since $\ker \varphi = 1 \varphi^{-1}$ is a normal subgroup of $\Delta \varphi^{-1} = \Delta^{\hat 0}$, $(\ker \varphi)^{g^{-1}}$ is $\ker \varphi$ or $(\ker \varphi)^{R_0}$ depending on whether $g$ is in
$\Delta^{\hat 0}$ or not.
\end{proof}

\begin{theorem} \label{th_constructed_by_phi}
Let $\cA$ and $\cB$ be bipartite hypermaps.
\begin{enumerate}
\item If $\cA \to \cB$ and $\cA \in \im \varphi$, then $\cB \in \im \varphi$.
\item If $\cB^+ \in \im \varphi$, then $\cB \in \im \varphi$.
\item If $\cB_\Delta \in \im \varphi$, then $\cB \in \im \varphi$.
\item If $\cB \in \im \varphi$, then $\cB^\Delta \in \im \varphi$.
\end{enumerate}
\end{theorem}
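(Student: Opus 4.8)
The plan is to reduce all four statements to the subgroup criterion of Lemma~\ref{constructed_by_phi}, writing $K = \ker\varphi$. That criterion says a bipartite hypermap $\cX$ with hypermap subgroup $X$ lies in $\im\varphi$ precisely when $K \subseteq X^g$ for some $g \in \Delta$. Since replacing $X$ by a conjugate $X^s$ merely reshuffles the admissible exponents (the condition becomes $K \subseteq X^{sg}$ for some $g$, which ranges over the same set of inclusions), this condition is invariant under the choice of hypermap subgroup, so I may select representatives as convenient. My intention is to prove item~1 directly from this criterion and then to obtain items 2--4 as instances of item~1, each time feeding in the appropriate covering.

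For item~1, I would use that $\cA \to \cB$ is equivalent, after a suitable choice of hypermap subgroups, to the inclusion $A \subseteq B$. Since $\cA \in \im\varphi$, the criterion supplies some $g \in \Delta$ with $K \subseteq A^g$. Conjugating $A \subseteq B$ by $g$ gives $A^g \subseteq B^g$, so that $K \subseteq A^g \subseteq B^g$; hence $K \subseteq B^g$ and the criterion yields $\cB \in \im\varphi$. This is a one-line inclusion chase and is where the real content sits.

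I would then deduce items 2, 3 and 4 by identifying the relevant covering and invoking item~1. For item~2 the orientable cover satisfies $\cB^+ \to \cB$, so taking $\cA = \cB^+ \in \im\varphi$ gives $\cB \in \im\varphi$; for item~3 the covering core satisfies $\cB_\Delta \to \cB$, so taking $\cA = \cB_\Delta \in \im\varphi$ gives $\cB \in \im\varphi$. For item~4 one reads the covering in the opposite direction: since $\cB^\Delta$ is the largest regular hypermap covered by $\cB$, we have $\cB \to \cB^\Delta$, and applying item~1 with $\cA = \cB \in \im\varphi$ (now with $\cB^\Delta$ in the role of the covered hypermap) gives $\cB^\Delta \in \im\varphi$. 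In every case the objects $\cB^+$, $\cB_\Delta$, $\cB^\Delta$ are again bipartite, because $B \leq \Delta^{\hat 0}$ and, as recorded just before Lemma~\ref{odc_cc_cc}, passing to $\cH^+$, $\cH_\Delta$ or $\cH^\Delta$ preserves $\Delta^{\hat 0}$-conservativity; this is what makes the membership assertions well posed.

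The only step demanding genuine care is the bookkeeping of directions in item~4: one must remember that $\cB^\Delta$ is \emph{covered by} $\cB$ rather than covering it, so that item~1 applies with $\cB$ as the covering hypermap and $\cB^\Delta$ as the covered one. A secondary point to check is the conjugacy-invariance of the criterion of Lemma~\ref{constructed_by_phi}, which is precisely what licenses the free passage to representatives with $A \subseteq B$ used in item~1; once these two observations are in place, the whole theorem is routine.
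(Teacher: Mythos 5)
Your proposal is correct and follows essentially the same route as the paper: item 1 is the inclusion chase $K \subseteq A^g \subseteq B^g$ via Lemma~\ref{constructed_by_phi} and the covering--subgroup correspondence, and items 2--4 are then instances of item 1 using the coverings $\cB^+ \to \cB$, $\cB_\Delta \to \cB$ and $\cB \to \cB^\Delta$, with the same bipartiteness observations. The paper's proof is just a terser version of exactly this argument.
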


\begin{proof}

\noindent 1. Follows from Lemma \ref{constructed_by_phi} and the fact that if $A$ and $B$ are hypermap subgroups for $\cA$ and $\cB$, then $A \subseteq B^h$, for some $h \in \Delta$.

\noindent 2. and 3. Follow from 1 because both $\cB^+$ and $\cB_\Delta$ are bipartite and cover $\cB$.

\noindent 4. Follows from 1 since $\cB^\Delta$ is a bipartite hypermap covered by $\cB$.
\end{proof}

\begin{theorem}
$\varphi (\cH_\Delta)$ is covered by $\varphi (\cH)_\Delta$ and $\varphi (\cH^\Delta)$ covers $\varphi (\cH)^\Delta$.
\end{theorem}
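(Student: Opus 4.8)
The plan is to reduce both coverings to inclusions of hypermap subgroups and then to analyse how conjugation in $\Delta$ interacts with the preimage map $\varphi^{-1}$. Fix a hypermap subgroup $H$ of $\cH$. By definition of the operation $\varphi$ together with the definitions of covering core and closure cover, the hypermaps $\varphi(\cH)_\Delta$, $\varphi(\cH_\Delta)$, $\varphi(\cH^\Delta)$ and $\varphi(\cH)^\Delta$ have subgroups $(H\varphi^{-1})_\Delta$, $(H_\Delta)\varphi^{-1}$, $(H^\Delta)\varphi^{-1}$ and $(H\varphi^{-1})^\Delta$, respectively. Using the subgroup criterion for coverings (a hypermap with subgroup $U$ covers one with subgroup $W$ exactly when $U \subseteq W^g$ for some $g \in \Delta$), the theorem amounts to the two inclusions $(H\varphi^{-1})_\Delta \subseteq (H_\Delta)\varphi^{-1}$ and $(H^\Delta)\varphi^{-1} \subseteq (H\varphi^{-1})^\Delta$.

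The first tool I would assemble is the identity $(H\varphi^{-1})^g = (H^{g\varphi})\varphi^{-1}$, valid for all $g \in \Delta^{\hat 0}$ and already invoked in the proof of Lemma \ref{lemma_varphi}. Since $\varphi$ maps $\Delta^{\hat 0}$ onto $\Delta$, as $g$ runs through $\Delta^{\hat 0}$ the image $g\varphi$ runs through all of $\Delta$; thus the $\Delta^{\hat 0}$-conjugates of $H\varphi^{-1}$ are exactly the preimages $(H^d)\varphi^{-1}$ with $d \in \Delta$. The second tool is the fact that $\varphi^{-1}$ commutes with intersections and, because every such preimage contains $\ker\varphi$, also with the formation of joins. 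Combining the two, I would obtain the two equalities $\bigcap_{g \in \Delta^{\hat 0}} (H\varphi^{-1})^g = (H_\Delta)\varphi^{-1}$ and $\big\langle (H\varphi^{-1})^g : g \in \Delta^{\hat 0} \big\rangle = (H^\Delta)\varphi^{-1}$.

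The desired inclusions then fall out from comparing conjugation over $\Delta^{\hat 0}$ with conjugation over the larger group $\Delta$. Indeed $(H\varphi^{-1})_\Delta$ is an intersection over all of $\Delta$, hence is contained in the intersection over $\Delta^{\hat 0}$, giving $(H\varphi^{-1})_\Delta \subseteq (H_\Delta)\varphi^{-1}$; dually $(H\varphi^{-1})^\Delta$ is a join over all of $\Delta$, hence contains the join over $\Delta^{\hat 0}$, giving $(H^\Delta)\varphi^{-1} \subseteq (H\varphi^{-1})^\Delta$. Each inclusion is precisely the covering claimed.

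I expect the main subtlety — and the reason only coverings, not isomorphisms, can be asserted — to lie in the passage from $\Delta^{\hat 0}$-conjugates to $\Delta$-conjugates, that is, in the extra conjugation by $R_0 \notin \Delta^{\hat 0}$. Splitting $\Delta = \Delta^{\hat 0} \cup \Delta^{\hat 0} R_0$ gives the explicit descriptions $(H\varphi^{-1})_\Delta = (H_\Delta)\varphi^{-1} \cap ((H_\Delta)\varphi^{-1})^{R_0}$ and $(H\varphi^{-1})^\Delta = (H^\Delta)\varphi^{-1} \cdot ((H^\Delta)\varphi^{-1})^{R_0}$, so equality holds precisely when the relevant preimage is $R_0$-invariant, that is, when $\varphi(\cH_\Delta)$ (respectively $\varphi(\cH^\Delta)$) is regular rather than merely bipartite-regular (cf. Lemma \ref{lemma_varphi}, part 2). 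The only step that is not purely formal is verifying that the join of the preimages equals the preimage of the join, which is exactly where the containment $\ker\varphi \subseteq (H^\Delta)\varphi^{-1}$ is used.
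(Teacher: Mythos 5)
Your proof is correct, but it takes a genuinely different route from the paper's. The paper works \emph{downstairs} in the codomain $\Delta$: since $\varphi$ is an epimorphism, the images $(H\varphi^{-1})_\Delta\varphi$ and $(H\varphi^{-1})^\Delta\varphi$ are normal subgroups of $\Delta$ squeezed around $H$, namely $(H\varphi^{-1})_\Delta\varphi \subseteq H \subseteq (H\varphi^{-1})^\Delta\varphi$, so the extremal characterisations of core and closure give $(H\varphi^{-1})_\Delta\varphi \subseteq H_\Delta$ and $H^\Delta \subseteq (H\varphi^{-1})^\Delta\varphi$; pulling back with $\varphi^{-1}$ (using $A \subseteq A\ker\varphi = A\varphi\varphi^{-1}$ for any subgroup $A$ of $\Delta^{\hat 0}$, and $\ker\varphi \subseteq H\varphi^{-1} \subseteq (H\varphi^{-1})^\Delta$ for the closure) yields exactly your two inclusions $(H\varphi^{-1})_\Delta \subseteq (H_\Delta)\varphi^{-1}$ and $(H^\Delta)\varphi^{-1} \subseteq (H\varphi^{-1})^\Delta$. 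You instead work \emph{upstairs} in $\Delta^{\hat 0}$: via the identity $(H^{g\varphi})\varphi^{-1} = (H\varphi^{-1})^g$ and surjectivity of $\varphi$ you identify $(H_\Delta)\varphi^{-1}$ and $(H^\Delta)\varphi^{-1}$ as, respectively, the intersection and the join of the $\Delta^{\hat 0}$-conjugates of $H\varphi^{-1}$, and then compare with the intersection and join over all $\Delta$-conjugates. Both arguments rest on the same two pillars (surjectivity of $\varphi$, and the kernel containment that lets $\varphi^{-1}$ interact well with joins), and both are complete; the paper's is somewhat leaner, needing neither the conjugation identity nor the correspondence-theorem argument for joins. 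What your route buys is the explicit defect formulas $(H\varphi^{-1})_\Delta = (H_\Delta)\varphi^{-1} \cap ((H_\Delta)\varphi^{-1})^{R_0}$ and $(H\varphi^{-1})^\Delta = (H^\Delta)\varphi^{-1} \cdot ((H^\Delta)\varphi^{-1})^{R_0}$, which isolate the failure of equality in the single conjugation by $R_0$ and thereby actually prove the criteria the paper only asserts immediately after the theorem: $\varphi(\cH)_\Delta \cong \varphi(\cH_\Delta)$ if and only if $\varphi(\cH_\Delta)$ is regular, and $\varphi(\cH^\Delta) \cong \varphi(\cH)^\Delta$ if and only if $\varphi(\cH^\Delta)$ is regular.
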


\begin{proof}
%
First of all note that $\varphi (\cH)_\Delta$ and $\varphi (\cH)^\Delta$ are both bipartite since $\varphi (\cH)$ is bipartite. Because $\varphi$ is an epimorphism, $(H \varphi^{-1})_\Delta \varphi$ and $(H \varphi^{-1})^\Delta \varphi$ are normal subgroups of $\Delta$ such that $(H \varphi^{-1})_\Delta \varphi \subseteq H \varphi^{-1} \varphi = H$ and $H = H \varphi^{-1} \varphi \subseteq (H \varphi^{-1})^\Delta \varphi$. Hence, $(H \varphi^{-1})_\Delta \varphi \subseteq H_\Delta$ and $H^\Delta \subseteq (H \varphi^{-1})^\Delta \varphi$.
Finally,
$$
(H \varphi^{-1})_\Delta \subseteq (H \varphi^{-1})_\Delta \ker \varphi = (H \varphi^{-1})_\Delta \varphi \varphi^{-1} \subseteq H_\Delta \varphi^{-1}
$$
and
$$
H^\Delta \varphi^{-1} \subseteq (H \varphi^{-1})^\Delta \varphi \varphi^{-1} = (H \varphi^{-1})^\Delta \ker \varphi = (H \varphi^{-1})^\Delta
$$
because $\ker \varphi = 1 \varphi^{-1} \subseteq H \varphi^{-1} \subseteq (H \varphi^{-1})^\Delta$.
\end{proof}

Hence, we have the following coverings:
$$
\varphi (\cH)_\Delta \to \varphi (\cH_\Delta) \to \varphi (\cH) \to \varphi (\cH^\Delta) \to \varphi (\cH)^\Delta
$$%
In addition, $\varphi (\cH)_\Delta$ is isomorphic to $\varphi (\cH_\Delta)$ if and only if $\varphi (\cH_\Delta)$ is regular, $\varphi (\cH_\Delta)$ and $\varphi (\cH^\Delta)$ are isomorphic to $\varphi (\cH)$ if and only if $\cH$ is regular and $\varphi (\cH^\Delta)$ is isomorphic to $\varphi (\cH)^\Delta$ if and only if $\varphi (\cH^\Delta)$ is regular.

Now we prove a technical result.

\begin{lemma} \label{torsion}
\begin{enumerate}
\item If $g$ is a conjugate of $R_1$, $R_2$, ${R_1}^{R_0}$ or ${R_2}^{R_0}$ in $\Delta^{\hat 0}$, then $g \varphi$ is
either 1 or a conjugate of $R_0$, $R_1$ or $R_2$ in $\Delta$. In addition, $g \in \Delta^+ \varphi^{-1}$ if and only if
$g \in \ker \varphi$.
\item If $h$ is a conjugate of $R_0$, $R_1$ or $R_2$ in $\Delta$, then there is a conjugate $g$ of $R_1$, $R_2$,
${R_1}^{R_0}$ or ${R_2}^{R_0}$ in $\Delta^{\hat 0}$ such that $g \varphi = h$.
\end{enumerate}
\end{lemma}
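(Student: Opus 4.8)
The plan is to exploit the free-product structure of both groups together with the torsion theorem for free products. First I would record that $\Delta^{\hat 0}$ is freely generated, as a free product, by the four involutions $R_1$, $R_2$, $R_1^{R_0}$ and $R_2^{R_0}$, so that $\Delta^{\hat 0} \cong C_2 * C_2 * C_2 * C_2$ (Reidemeister--Schreier with transversal $\{1, R_0\}$). Since in any free product every element of finite order is conjugate to an element of one of the free factors, this yields two dictionaries: the involutions of $\Delta^{\hat 0}$ are exactly the conjugates in $\Delta^{\hat 0}$ of $R_1, R_2, R_1^{R_0}, R_2^{R_0}$, and the involutions of $\Delta = C_2*C_2*C_2$ are exactly the conjugates of $R_0, R_1, R_2$.

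For part 1, the element $g$ is a conjugate of an involution, hence $g^2 = 1$, so $(g\varphi)^2 = 1$. Thus $g\varphi$ is either $1$ or an involution of $\Delta$, and by the torsion dictionary the latter is a conjugate of $R_0, R_1$ or $R_2$. For the addendum I would use the length-parity epimorphism $\epsilon \colon \Delta \to C_2$ sending each $R_i$ to the generator of $C_2$, whose kernel is precisely the even word subgroup $\Delta^+$. A conjugate $w^{-1} R_i w$ has $\epsilon$-image $1 \ne 0$, so lies outside $\Delta^+$, whereas $1 \in \Delta^+$. Since $g \in \Delta^+\varphi^{-1}$ means exactly $g\varphi \in \Delta^+$, and $g\varphi$ is $1$ or a conjugate of some $R_i$, we conclude $g\varphi \in \Delta^+ \iff g\varphi = 1 \iff g \in \ker\varphi$.

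For part 2 the target $h$ is given as a conjugate of some $R_i$, and I must produce a conjugate $g$ of one of the four generators with $g\varphi = h$. The first observation is that, because $\varphi$ is onto, it carries the $\Delta^{\hat 0}$-conjugacy class of a generator $x$ onto the full $\Delta$-conjugacy class of $x\varphi$: indeed $\{(w^{-1} x w)\varphi : w \in \Delta^{\hat 0}\} = \{v^{-1}(x\varphi) v : v \in \Delta\}$ since $v = w\varphi$ ranges over all of $\Delta$. Hence it suffices to show that among $R_1\varphi, R_2\varphi, R_1^{R_0}\varphi, R_2^{R_0}\varphi$ there is one lying in the conjugacy class of $R_i$; then that class, which $\varphi$ hits entirely, contains $h$, furnishing the required $g$. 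To see that the four images meet all three classes $[R_0], [R_1], [R_2]$, I would abelianize: surjectivity of $\varphi$ forces the induced map $(\Delta^{\hat 0})^{\mathrm{ab}} \cong C_2^4 \to \Delta^{\mathrm{ab}} \cong C_2^3$ to be onto, while by part 1 each generator's image lands in $\{0, \bar R_0, \bar R_1, \bar R_2\}$ inside $\Delta^{\mathrm{ab}}$; as $\bar R_0, \bar R_1, \bar R_2$ form a basis, all three must occur among the four images, so all three conjugacy classes are realised.

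The main obstacle is the careful use of the free-product machinery: identifying $\Delta^{\hat 0}$ with $C_2^{*4}$ on the stated generators and invoking the torsion theorem, so that ``involution'' and ``conjugate of a canonical generator'' become interchangeable in both groups. Once that dictionary is in place, part 1 is essentially formal, and part 2 reduces to the two clean points above, namely that a surjection sends a conjugacy class onto a conjugacy class and that the abelianized surjection forces all three basis classes to be attained.
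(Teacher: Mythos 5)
Your proof is correct, and its skeleton matches the paper's; the differences are in how two steps of part 2 are justified. Part 1 is essentially identical to the paper's: square $g$, invoke the torsion theorem for free products to see $g\varphi$ is $1$ or a conjugate of some $R_i$, and get the addendum from the fact that no conjugate of $R_0$, $R_1$, $R_2$ lies in $\Delta^+$ (the paper cites this fact outright; you re-derive it via the parity epimorphism $\Delta \to C_2$, which is the standard justification). In part 2 both proofs reduce to showing that, for each $k$, some element of $S\varphi = \{R_1\varphi, R_2\varphi, {R_1}^{R_0}\varphi, {R_2}^{R_0}\varphi\}$ is conjugate to $R_k$, and then transporting it onto the given $h$ by a conjugation supplied by surjectivity. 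For the first step the paper works inside $\Delta$ directly: if no element of $S\varphi$ were conjugate to $R_k$, then by part 1 every element of $S\varphi$ would be $1$ or a conjugate of $R_i$ or $R_j$, forcing $\Delta = \langle S\varphi \rangle \subseteq \langle R_i, R_j \rangle^\Delta = \Delta^{\hat k} \neq \Delta$. You instead abelianize, noting the induced surjection $C_2^4 \to C_2^3$ must hit all three basis vectors $\bar R_0, \bar R_1, \bar R_2$, and (again via part 1) an image abelianizing to $\bar R_k$ must be conjugate to $R_k$. These are two phrasings of the same obstruction, since $\Delta^{\hat k}$ is exactly the preimage of $\langle \bar R_i, \bar R_j \rangle$ under abelianization. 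For the second step the paper builds the conjugator explicitly (choose $s\varphi = {R_k}^b$, choose $a \in \Delta^{\hat 0}$ with $a\varphi = b^{-1}c$, set $g = s^a$), whereas you invoke the general fact that a surjective homomorphism carries a conjugacy class onto the full conjugacy class of the image; abstracting the paper's computation gives precisely your fact. Net effect: your version is slightly more structural and modular, the paper's more hands-on, but neither is more general or more elementary in substance.
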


\begin{proof}
1. Since $(g \varphi)^2 = (g^2) \varphi = 1 \varphi = 1$, $g \varphi$ has order 1 or 2. If $g \varphi$ has order 1, $g
\varphi = 1$ and $g \in \ker \varphi$. When $g \varphi$ has order 2, the torsion theorem for free products (Theorem 1.6
in \S IV.1 of \cite{LyndonSchupp1977}) guarantees that $g \varphi$ is a conjugate of $R_0$, $R_1$ or $R_2$ in $\Delta$.
Because no conjugate of $R_0$, $R_1$ or $R_2$ belongs to $\Delta^+$, the only element of finite order in $\Delta^+$ is 1.
Therefore, $g \varphi \in \Delta^+$ if and only if $g \varphi = 1$, or equivalently, $g \in \Delta^+
\varphi^{-1}$ if and only if $g \in \ker \varphi$.

\noindent 2. Let $S = \{ R_1, R_2, {R_1}^{R_0}, {R_2}^{R_0} \}$, $\{ i, j, k \} = \{ 0, 1, 2 \}$ and $h = {R_k}^c$, for
some $c \in \Delta$. The set $S \varphi$ has a conjugate of $R_k$ in $\Delta$. For otherwise all elements of $S \varphi$
would be either 1 or conjugates of $R_i$ or $R_j$ in $\Delta$, and so $\Delta^{\hat 0} \varphi = \langle S \rangle \varphi
= \langle S \varphi \rangle \subseteq \langle R_i, R_j \rangle^\Delta = \Delta^{\hat k} \neq \Delta$. Let $s \in S$ be such
that $s \varphi = {R_k}^b$, for some $b \in \Delta$. Finally, let $a \in \Delta^{\hat 0}$ be such that $a \varphi = b^{-1}c$.
Then $g := s^a$ is a conjugate of $R_1$, $R_2$, ${R_1}^{R_0}$ or ${R_2}^{R_0}$ in $\Delta^{\hat 0}$ such that $g \varphi = h$.
\end{proof}

\begin{theorem} \label{th_b_o_r}
\begin{enumerate}
\item If $\varphi (\cH)$ has no boundary, then $\cH$ has no boundary.
\item If $\varphi (\cH)$ is orientable, then $\cH$ is orientable.
\end{enumerate}
\end{theorem}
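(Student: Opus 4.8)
The plan is to deduce both statements from the torsion description of $\varphi$ given in Lemma \ref{torsion}, together with the boundary and orientability criteria recalled earlier. Throughout I use that $\varphi(\cH)$ is bipartite with hypermap subgroup $H\varphi^{-1}$, and that $\Delta^{\hat 0}$ is generated by the four involutions $R_1$, $R_2$, ${R_1}^{R_0}$, ${R_2}^{R_0}$.

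For part 1 I would argue by contrapositive. Assume $\cH$ has boundary, so some conjugate $h$ of $R_0$, $R_1$ or $R_2$ in $\Delta$ lies in $H$. Part 2 of Lemma \ref{torsion} furnishes a conjugate $g$ of $R_1$, $R_2$, ${R_1}^{R_0}$ or ${R_2}^{R_0}$ in $\Delta^{\hat 0}$ with $g\varphi = h$; since $h \in H$ this gives $g \in H\varphi^{-1}$. As $\varphi(\cH)$ is bipartite, the boundary criterion for bipartite hypermaps (no conjugate of $R_1$, $R_2$, ${R_1}^{R_0}$, ${R_2}^{R_0}$ in $\Delta^{\hat 0}$ lies in the hypermap subgroup) then shows that $\varphi(\cH)$ has boundary. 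This finishes part 1; the substantive content is entirely carried by Lemma \ref{torsion}.

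For part 2 I would argue directly, the key being that orientability of $\varphi(\cH)$ is itself a constraint on $\varphi$. Write $\lambda\colon \Delta \to C_2$ for the parity homomorphism with $\ker\lambda = \Delta^+$, so a hypermap is orientable exactly when $\lambda$ annihilates its hypermap subgroup. Assume $H\varphi^{-1} \le \Delta^+$. Since $\ker\varphi = 1\varphi^{-1} \subseteq H\varphi^{-1} \subseteq \Delta^+$, and each generator $R_1$, $R_2$, ${R_1}^{R_0}$, ${R_2}^{R_0}$ has odd length and hence lies outside $\Delta^+$, none of these generators belongs to $\ker\varphi$; by part 1 of Lemma \ref{torsion} this forces $s\varphi \notin \Delta^+$, i.e. $\lambda(s\varphi) = 1$, for each such generator $s$. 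As also $\lambda(s) = 1$, the two homomorphisms $\lambda$ and $\lambda\circ\varphi$ from $\Delta^{\hat 0}$ to $C_2$ agree on all four generators, hence coincide; equivalently $\Delta^+ \cap \Delta^{\hat 0} = \Delta^+\varphi^{-1}$. Therefore $H\varphi^{-1} \le \Delta^+ \cap \Delta^{\hat 0} = \Delta^+\varphi^{-1}$, whence $H = (H\varphi^{-1})\varphi \le (\Delta^+\varphi^{-1})\varphi = \Delta^+$ and $\cH$ is orientable.

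I expect the only genuinely delicate point to be in part 2. A naive attempt to transfer parity through $\varphi$ fails in general: a generator of $\Delta^{\hat 0}$ may lie in $\ker\varphi$, and then $\lambda\circ\varphi$ differs from the native parity $\lambda$ on $\Delta^{\hat 0}$, so $\Delta^+$-membership need not pass through $\varphi$. The resolution is to observe that precisely this bad case is incompatible with the hypothesis: such a generator would place an odd-length element into $\ker\varphi \subseteq H\varphi^{-1}$, contradicting $H\varphi^{-1} \le \Delta^+$. Recognizing that orientability of $\varphi(\cH)$ forces $\ker\varphi \le \Delta^+$, and that this is exactly the condition making the two parity maps coincide, is the crux; once it is secured, both inclusions are immediate.
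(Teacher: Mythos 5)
Your proof is correct, and it splits into two cases relative to the paper. Part 1 is essentially identical to the paper's argument: both prove the contrapositive by lifting a boundary element $h \in H$ through part 2 of Lemma \ref{torsion} to get an element of $H\varphi^{-1}$ witnessing boundary in $\varphi(\cH)$. Part 2, however, takes a genuinely different route. The paper again uses part 2 of Lemma \ref{torsion} and argues by contrapositive: it writes an odd-length $h = h_1\cdots h_k \in H\setminus\Delta^+$, lifts each letter $h_i$ to an odd-length conjugate $g_i \in \Delta^{\hat 0}$ with $g_i\varphi = h_i$, and notes that $g = g_1\cdots g_k \in H\varphi^{-1}$ has the same parity as $h$, so $H\varphi^{-1}\nsubseteq\Delta^+$. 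You instead argue directly: the hypothesis $H\varphi^{-1}\le\Delta^+$ forces $\ker\varphi\le\Delta^+$, hence by part 1 of Lemma \ref{torsion} no generator of $\Delta^{\hat 0}$ is sent into $\Delta^+$, so the native parity of $\Delta^{\hat 0}$ and the parity pulled back through $\varphi$ agree on the four generators and therefore coincide, giving $\Delta^+\varphi^{-1} = \Delta^+\cap\Delta^{\hat 0}$ and then $H = (H\varphi^{-1})\varphi \le \Delta^+$ by surjectivity. This is in substance the chain $(5)\Rightarrow(3)\Rightarrow(1)\Rightarrow(2)$ of the paper's later Theorem \ref{th_main}; since that theorem does not rely on Theorem \ref{th_b_o_r} and you prove the needed facts inline, there is no circularity. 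As for what each approach buys: the paper's lifting argument is uniform across both parts and needs only the surjectivity half of Lemma \ref{torsion}, whereas your argument isolates the structural condition $\ker\varphi\le\Delta^+$ as the crux, and your ``two homomorphisms agreeing on generators'' trick is a cleaner proof of the identity $\Delta^+\varphi^{-1} = \Delta^+\cap\Delta^{\hat 0}$ than the subgroup-$S$ computation the paper carries out inside the proof of Theorem \ref{th_main}.
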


\begin{proof}
Let $H$ be a hypermap subgroup for $\cH$.

\noindent 1. If $\cH$ has boundary, then there is a conjugate $h$ of $R_0$, $R_1$ and $R_2$ such that $h \in H$. By 2.
of Lemma \ref{torsion}, there is a conjugate $g$ of $R_1$, $R_2$, ${R_1}^{R_0}$ or ${R_2}^{R_0}$ in $\Delta^{\hat 0}$
such that $g \varphi = h \in H$. It follows that $g \in H \varphi^{-1}$ and hence $\varphi (\cH)$ has boundary.

\noindent 2. Let $h = h_1 \cdots h_k \in H$, where $h_1, \ldots, h_k \in \{ R_0, R_1, R_2 \}$. By 2. of Lemma
\ref{torsion}, there are conjugates $g_1, \ldots, g_k$ of $R_1$, $R_2$, ${R_1}^{R_0}$ or ${R_2}^{R_0}$ in $\Delta^{\hat
0}$ such that $g_i \varphi = h_i$, for all $i \in \{ 1, \ldots, k \}$. Since $g = g_1 \cdots g_k \in \Delta^{\hat 0}$
and $g \varphi = h \in H$, $g \in H \varphi^{-1}$. In addition, $g \in \Delta^+$ if and only if $k$ is even, that is,
if and only if $h \in \Delta^+$. Consequently, if $H \nsubseteq \Delta^+$, then $H \varphi^{-1} \nsubseteq \Delta^+$,
that is, if $\cH$ is not orientable, then $\varphi (\cH)$ is not orientable.
\end{proof}

We now come to the main result of this section.

\begin{theorem} \label{th_main}
Let $\cH$, $\cN$ and $\cO$ be hypermaps such that $\cN$ has no boundary and $\cO$ is orientable. The following conditions are equivalent:
\begin{enumerate}
\item $R_1, R_2, {R_1}^{R_0}, {R_2}^{R_0} \notin \Delta^+ \varphi^{-1}$.
\item $\Delta^+ \varphi^{-1} = \Delta^+ \cap \Delta^{\hat 0} = \Delta^{+0\hat{0}}$, that is, $\varphi (\cT_{\Delta^+}) \cong \cT_{\Delta^{+0\hat{0}}}$.
\item $\ker \varphi \subseteq \Delta^+$.
\item $\varphi(\cN)$ has no boundary.
\item $\varphi (\cO)$ is orientable.
\item $\varphi (\cH^+) \cong \varphi (\cH)^+$.
\item $\varphi(\cH)^+ \in \im \varphi$.
\end{enumerate}
\end{theorem}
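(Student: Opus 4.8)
The plan is to make the chain $1\Leftrightarrow 2\Leftrightarrow 3$ the purely group-theoretic backbone and then hang each of $4$, $5$, $6$, $7$ off these three as a separate spoke, using Lemma \ref{torsion}, Theorem \ref{th_b_o_r} and Lemma \ref{constructed_by_phi}. The two tools that do all the work are the length-parity epimorphism $\pi\colon\Delta\to C_2$ with kernel $\Delta^+$, and the fact that $\Delta^{\hat{0}}$ is the free product of the four copies of $C_2$ generated by the involutions $R_1,R_2,{R_1}^{R_0},{R_2}^{R_0}$, each of odd $\Delta$-length.

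For the backbone I would argue $3\Rightarrow 1\Rightarrow 2\Rightarrow 3$. The implication $3\Rightarrow 1$ is immediate: each of the four generators has odd length, so it lies outside $\Delta^+\supseteq\ker\varphi$, hence outside $\ker\varphi$, and Lemma \ref{torsion}(1) converts ``$\notin\ker\varphi$'' into ``$\notin\Delta^+\varphi^{-1}$''. Likewise $2\Rightarrow 3$ is the one-line inclusion $\ker\varphi\subseteq\Delta^+\varphi^{-1}=\Delta^+\cap\Delta^{\hat{0}}\subseteq\Delta^+$. The only step with genuine content is $1\Rightarrow 2$: by Lemma \ref{torsion}(1), for a generator $s$ the condition $s\notin\Delta^+\varphi^{-1}$ is equivalent to $s\notin\ker\varphi$, i.e.\ to $s\varphi$ being a conjugate of some $R_k$, which has odd length and so satisfies $\pi(s\varphi)=1$. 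Hence under $1$ the two index-$2$ subgroups $\Delta^+\varphi^{-1}=\ker(\pi\circ\varphi)$ and $\Delta^+\cap\Delta^{\hat{0}}=\ker(\pi|_{\Delta^{\hat{0}}})$ send every one of the four generators to $1\in C_2$; since a homomorphism out of the free product $\Delta^{\hat{0}}$ is determined by its values on these generators, the two epimorphisms to $C_2$ coincide and so do their kernels, which is exactly condition $2$ (equivalently $\varphi(\cT_{\Delta^+})\cong\cT_{\Delta^{+0\hat{0}}}$). I expect this passage --- from the four \emph{specific} generators avoiding $\Delta^+\varphi^{-1}$ to a statement about \emph{all} of $\ker\varphi$ --- to be the main obstacle; the free-product normal form together with the parity homomorphism is what lets me upgrade the generator-wise information to the global equality.

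With the backbone in place, conditions $4$ and $5$ attach quickly. Since $\varphi(\cN)$ is bipartite, ``no boundary'' means no $\Delta^{\hat{0}}$-conjugate of the four generators lies in $N\varphi^{-1}$; if $1$ fails then some generator $s\in\ker\varphi\subseteq N\varphi^{-1}$ produces boundary (using only $1\in N$), giving $\neg 1\Rightarrow\neg 4$, while for $3\Rightarrow 4$ any offending conjugate $g\in N\varphi^{-1}$ has $g\varphi\in N$ equal either to $1$ (impossible, as $g$ has odd length but $\ker\varphi\subseteq\Delta^+$) or to a conjugate of some $R_k$ (impossible, as $\cN$ has no boundary) by Lemma \ref{torsion}. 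Likewise $\varphi(\cO)$ is orientable iff $O\varphi^{-1}\subseteq\Delta^+$: if $1$ fails, $s\varphi=1\in O$ puts the odd-length $s$ in $O\varphi^{-1}\setminus\Delta^+$, so $\neg 1\Rightarrow\neg 5$; and for $3\Rightarrow 5$, any $w\in O\varphi^{-1}$ has $w\varphi\in O\subseteq\Delta^+$, so $\pi(w\varphi)=0$, whence $\pi(w)=0$ by the equality of the two maps to $C_2$ established in $2$, i.e.\ $w\in\Delta^+$.

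Finally I would treat the two ``commutation'' spokes. For $7$, Lemma \ref{constructed_by_phi} gives $\varphi(\cH)^+\in\im\varphi$ iff $\ker\varphi\subseteq H\varphi^{-1}\cap\Delta^+$ or $(\ker\varphi)^{R_0}\subseteq H\varphi^{-1}\cap\Delta^+$; since $\ker\varphi\subseteq H\varphi^{-1}$ always and $\Delta^+$ is normal (so $(\ker\varphi)^{R_0}\subseteq\Delta^+\Leftrightarrow\ker\varphi\subseteq\Delta^+$), either disjunct reduces to condition $3$. For $6$, I compute that $\varphi(\cH^+)$ has hypermap subgroup $(H\cap\Delta^+)\varphi^{-1}=H\varphi^{-1}\cap\Delta^+\varphi^{-1}$ while $\varphi(\cH)^+$ has $H\varphi^{-1}\cap\Delta^+$; these coincide under $2$ because $H\varphi^{-1}\subseteq\Delta^{\hat{0}}$, giving $2\Rightarrow 6$. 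For the converse I use an invariant: $\varphi(\cH)^+$ is always orientable, whereas $\varphi(\cH^+)=\varphi(\cO)$ with $\cO=\cH^+$ orientable is orientable iff $1$ holds (by the $5$-spoke); hence $\neg 1$ makes $\varphi(\cH^+)$ non-orientable but $\varphi(\cH)^+$ orientable, so $\varphi(\cH^+)\not\cong\varphi(\cH)^+$, i.e.\ $6\Rightarrow 1$, uniformly in $\cH$. Chaining $3\Rightarrow 1\Rightarrow 2\Rightarrow 3$ with these four bi-implications completes the proof.
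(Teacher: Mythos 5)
Your proof is correct, and it runs on the same toolkit as the paper's (Lemma \ref{torsion}, Lemma \ref{constructed_by_phi}, and the free product decomposition of $\Delta^{\hat 0}$ into the four copies of $C_2$), but the architecture and one closing argument are genuinely different, so a comparison is worthwhile. Your proof of $1\Rightarrow 2$ via the agreement of the two parity homomorphisms $\pi\circ\varphi$ and $\pi|_{\Delta^{\hat 0}}$ on the free-product generators is the paper's argument in different clothing: the paper introduces $S=\{g\in\Delta^{\hat 0}\mid g\in\Delta^+\Leftrightarrow g\in\Delta^+\varphi^{-1}\}$, which is exactly the agreement locus of your two maps, and shows it is a subgroup containing the generators; the content is identical, though your phrasing is arguably cleaner. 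The structural differences are elsewhere. The paper threads condition 5 into its main cycle ($1\Rightarrow 2\Rightarrow 5\Rightarrow 3\Rightarrow 1$) and settles 6 and 7 by the chain $2\Rightarrow 6\Rightarrow 7\Rightarrow 3$, in which $6\Rightarrow 7$ costs nothing (obviously $\varphi(\cH^+)\in\im\varphi$) and $7\Rightarrow 3$ is a single application of Lemma \ref{constructed_by_phi} plus normality of $\Delta^+$. You instead prove $2\Rightarrow 3$ directly, hang 4, 5 and 7 off the backbone as bidirectional spokes, and close 6 with an invariant argument: $\varphi(\cH)^+$ is orientable by construction, while $\neg 1$ forces $\varphi(\cH^+)$ to be non-orientable, so the two cannot be isomorphic. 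That argument does not appear in the paper and is valid --- the step $\neg 1\Rightarrow\neg 5$ is indeed uniform in the orientable hypermap, as you note --- but it is heavier than the paper's route, where each link is nearly trivial; what it buys you is independence of the spokes from one another. One wording quibble: in your treatment of 7, the disjunct $(\ker\varphi)^{R_0}\subseteq H\varphi^{-1}\cap\Delta^+$ does not ``reduce to'' condition 3; it implies 3 (by normality of $\Delta^+$) but need not follow from it. Since the other disjunct is equivalent to 3, the biconditional you want still holds, but the asymmetry should be stated.
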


\begin{proof}
Let $H$, $N$ and $O$ be hypermap subgroups for $\cH$, $\cN$ and $\cO$, respectively.

\noindent $(1 \Rightarrow 2)$ %
Let $S := \{ g \in \Delta^{\hat 0} \mid g \in \Delta^+ \Leftrightarrow g \in \Delta^+ \varphi^{-1} \}$. Since:
\begin{itemize}
\item $R_1, R_2, {R_1}^{R_0}, {R_2}^{R_0} \in S \subseteq \Delta^{\hat 0}$, and so $S \neq \varnothing$,
\item for all $g, h \in S$, $gh \in S$, since
$$
\begin{array}{rcl}
gh \in \Delta^+ \varphi^{-1} & \Leftrightarrow & (gh) \varphi = (g \varphi)(h \varphi) \in \Delta^+ \\
& \Leftrightarrow & g \varphi, h \varphi \in \Delta^+ \text{ or } g \varphi, h \varphi \notin \Delta^+ \\
& \Leftrightarrow & g, h \in \Delta^+ \varphi^{-1} \text{ or } g, h \notin \Delta^+ \varphi^{-1} \\
& \Leftrightarrow & g, h \in \Delta^+ \text{ or } g, h \notin \Delta^+ \\
& \Leftrightarrow & g h \in \Delta^+, \\
\end{array}
$$
\item for all $g \in S$, $g^{-1} \in S$, because
$$
g^{-1} \in \Delta^+ \varphi^{-1} \Leftrightarrow g \in \Delta^+ \varphi^{-1} \Leftrightarrow g \in \Delta^+
\Leftrightarrow g^{-1} \in \Delta^+,
$$
\end{itemize}
it follows that $S$ is a subgroup of $\Delta^{\hat 0}$ containing $\Delta^{\hat 0}$, that is, $S = \Delta^{\hat 0}$. Therefore,
$$
g \in \Delta^+ \Leftrightarrow g \varphi \in \Delta^+ \Leftrightarrow g \in \Delta^+ \varphi^{-1},
$$
for all $g \in \Delta^{\hat 0}$, that is, $\Delta^+ \cap \Delta^{\hat 0} = \Delta^+ \varphi^{-1} \cap \Delta^{\hat 0} =  \Delta^+ \varphi^{-1}$.

\noindent $(2 \Rightarrow 5)$ Because $O \varphi^{-1} \subseteq \Delta^{\hat 0}$, $O \subseteq \Delta^+ \Leftrightarrow
O \varphi^{-1} \subseteq \Delta^+ \varphi^{-1} = \Delta^+ \cap \Delta^{\hat 0} \Leftrightarrow O \varphi^{-1} \subseteq
\Delta^+$.

\noindent $(5 \Rightarrow 3)$ $\ker \varphi = 1 \varphi^{-1} \subseteq O \varphi^{-1} \subseteq \Delta^+$.

\noindent $(3 \Rightarrow 1)$ Follows from 1. of Lemma \ref{torsion}, since $R_1, R_2, {R_1}^{R_0}, {R_2}^{R_0} \notin
\Delta^+$.

\noindent $(1 \Rightarrow 4)$ Assume that $R_1, R_2, {R_1}^{R_0}, {R_2}^{R_0} \notin \Delta^+ \varphi^{-1}$. Let $g$ be
a conjugate of $R_1$, $R_2$, ${R_1}^{R_0}$ or ${R_2}^{R_0}$ in $\Delta^{\hat 0}$. Since $\Delta^+ \varphi^{-1}$ is a
normal subgroup of $\Delta^{\hat 0}$, $g \notin \Delta^+ \varphi^{-1}$. In particular, $g \varphi \neq 1 \in \Delta^+$.
By 1. of Lemma \ref{torsion}, $g \varphi$ is a conjugate of $R_0$, $R_1$ or $R_2$ in $\Delta$. Since $\cN$ has no
boundary, $g \varphi \notin N$, and so $g \notin N \varphi^{-1}$. Because of this, no conjugate of $R_1$, $R_2$,
${R_1}^{R_0}$ or ${R_2}^{R_0}$ in $\Delta^{\hat 0}$ is in $N \varphi^{-1}$, that is, $\varphi (\cN)$ has no boundary.

\noindent $(4 \Rightarrow 1)$ Let $g$ be $R_1$, $R_2$, ${R_1}^{R_0}$ or ${R_2}^{R_0}$. Since $\varphi (\cN)$ has no
boundary, $g \notin N \varphi^{-1}$ and hence $g \varphi \neq 1 \in N$. By 1. of Lemma \ref{torsion}, $g \varphi$ is a
conjugate of $R_0$, $R_1$ or $R_2$ in $\Delta$. In particular $g \notin \Delta^+$, or equivalently, $g \notin \Delta^+
\varphi^{-1}$.

\noindent $(2 \Rightarrow 6)$ Since $(H^+) \varphi^{-1} = (H \cap \Delta^+) \varphi^{-1}$ and $(H \varphi^{-1})^+ = (H
\varphi^{-1}) \cap \Delta^+$ are hypermap subgroups for $\varphi (\cH^+)$ and $\varphi (\cH)^+$, and
$$
(H \cap \Delta^+) \varphi^{-1} = H \varphi^{-1} \cap \Delta^+ \varphi^{-1} = H \varphi^{-1} \cap (\Delta^+ \cap
\Delta^{\hat 0}) = (H \varphi^{-1} \cap \Delta^{\hat 0}) \cap \Delta^+ = (H \varphi^{-1}) \cap \Delta^+,
$$
$\varphi (\cH^+)$ and $\varphi (\cH)^+$ are isomorphic.

\noindent $(6 \Rightarrow 7)$ Clearly, $\varphi(\cH^+) \in \im \varphi$.

\noindent $(7 \Rightarrow 3)$ By Lemma \ref{constructed_by_phi} and because $(H \varphi^{-1})^+$ is a hypermap subgroup
for $\varphi(\cH)^+$, $\ker \varphi \subseteq ((H \varphi^{-1})^+)^g = (H \varphi^{-1} \cap \Delta^+)^g \subseteq
(\Delta^+)^g = \Delta^+$, for some $g \in \Delta$.
\end{proof}

As an immediate corollary to Theorem \ref{th_main} (and Theorems \ref{th_constructed_by_phi} and \ref{th_b_o_r}) we get the following.

\begin{corollary} \label{cor_main}
Let $\varphi$ be an epimorphism such that $R_1, R_2, {R_1}^{R_0}, {R_2}^{R_0} \notin \Delta^+ \varphi^{-1}$. Then:
\begin{enumerate}
\item Both $\cH$ and $\varphi (\cH)$ are orientable or neither is.
\item Both $\cH$ and $\varphi (\cH)$ have boundary or neither has.
\item If $\cB$ is bipartite, then both $\cB$ and $\cB^+$ belong to $\im \varphi$ or neither does.
\end{enumerate}
\end{corollary}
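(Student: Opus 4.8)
The plan is to treat each of the three biconditionals by splitting it into its two implications and observing that, in every case, one direction already holds unconditionally (by Theorem \ref{th_b_o_r} or Theorem \ref{th_constructed_by_phi}) while the reverse direction is exactly what Theorem \ref{th_main} supplies. The first move is the key observation that makes everything routine: the standing hypothesis $R_1, R_2, {R_1}^{R_0}, {R_2}^{R_0} \notin \Delta^+ \varphi^{-1}$ is precisely condition 1 of Theorem \ref{th_main}. Since conditions 1--7 there are all equivalent, the hypothesis gives us free access to conditions 4, 5, 6 and 7 for use below.

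For item 1, part 2 of Theorem \ref{th_b_o_r} already gives that $\varphi(\cH)$ orientable implies $\cH$ orientable, with no condition on $\varphi$. For the converse I would let $\cH$ play the role of the orientable hypermap $\cO$ in Theorem \ref{th_main} and invoke condition 5, obtaining that $\cH$ orientable implies $\varphi(\cH)$ orientable. The two directions together say $\cH$ and $\varphi(\cH)$ are orientable or neither is. Item 2 is handled in exactly the same fashion with ``orientable'' replaced by ``has no boundary'': part 1 of Theorem \ref{th_b_o_r} gives one direction unconditionally, and condition 4 of Theorem \ref{th_main} (taking $\cH$ as the boundary-free hypermap $\cN$) gives the other.

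For item 3, part 2 of Theorem \ref{th_constructed_by_phi} furnishes the implication $\cB^+ \in \im\varphi \Rightarrow \cB \in \im\varphi$ without any hypothesis on $\varphi$. For the reverse I would write $\cB \cong \varphi(\cH)$ for some hypermap $\cH$; then, because $\Delta^+$ is normal in $\Delta$, intersecting a hypermap subgroup with $\Delta^+$ commutes with conjugation, so $\cB^+ \cong \varphi(\cH)^+$. Condition 6 of Theorem \ref{th_main} then gives $\varphi(\cH)^+ \cong \varphi(\cH^+)$, which lies in $\im\varphi$ by definition (one could equally quote condition 7 directly). Hence $\cB \in \im\varphi$ forces $\cB^+ \in \im\varphi$.

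The argument is essentially bookkeeping, so there is no genuinely hard step; the only point that needs a moment's care is the passage $\cB \cong \varphi(\cH) \Rightarrow \cB^+ \cong \varphi(\cH)^+$ in item 3, where one must use the normality of $\Delta^+$ to see that the orientable double cover respects isomorphism of bipartite hypermaps. Everything else is a matter of correctly pairing each direction of each biconditional with the appropriate previously established result.
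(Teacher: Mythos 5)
Your proposal is correct and follows exactly the route the paper intends: the paper gives no separate proof, stating only that the corollary is immediate from Theorem \ref{th_main} together with Theorems \ref{th_constructed_by_phi} and \ref{th_b_o_r}, and your pairing of each one-directional implication with the corresponding unconditional theorem, and each converse with conditions 4, 5 and 7 of Theorem \ref{th_main}, is precisely that argument. The extra care you take in item 3 (that conjugation by a fixed element commutes with intersecting with the normal subgroup $\Delta^+$, so $\cB \cong \varphi(\cH)$ implies $\cB^+ \cong \varphi(\cH)^+$) is a detail the paper leaves implicit, and it is handled correctly.
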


\begin{corollary} \label{non-orientable_and_bipartite}
If $\cB \cong \varphi (\cH)$ is a non-orientable hypermap, then $\cB^+ \cong \varphi (\cH^+)$.
\end{corollary}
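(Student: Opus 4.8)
The plan is to reduce the claim to the equivalence $(1) \Leftrightarrow (6)$ of Theorem \ref{th_main}. First I note that since $\cB \cong \varphi(\cH)$, forming the even-word part commutes with this isomorphism: because $\Delta^+$ is normal in $\Delta$, conjugate hypermap subgroups have conjugate intersections with $\Delta^+$, so $\cB^+ \cong \varphi(\cH)^+$. Thus it suffices to prove $\varphi(\cH)^+ \cong \varphi(\cH^+)$, which is precisely condition $(6)$ of Theorem \ref{th_main}. By that theorem, $(6)$ follows as soon as I establish condition $(1)$, namely $R_1, R_2, {R_1}^{R_0}, {R_2}^{R_0} \notin \Delta^+ \varphi^{-1}$.

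To obtain condition $(1)$ I would exploit the hypothesis that $\cB$ is \emph{non-orientable}, which by definition means $\cB$ is not orientable \emph{and} has no boundary. Only the no-boundary half is needed: since $\cB \cong \varphi(\cH)$, the bipartite hypermap $\varphi(\cH)$ has no boundary, i.e.\ no conjugate of $R_1, R_2, {R_1}^{R_0}, {R_2}^{R_0}$ in $\Delta^{\hat 0}$ lies in its hypermap subgroup $H \varphi^{-1}$.

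I then argue condition $(1)$ by contradiction. Suppose some $s \in \{ R_1, R_2, {R_1}^{R_0}, {R_2}^{R_0} \}$ lies in $\Delta^+ \varphi^{-1}$, so that $s \varphi \in \Delta^+$. By part 1 of Lemma \ref{torsion}, $s \varphi$ is either $1$ or a conjugate of $R_0$, $R_1$ or $R_2$ in $\Delta$; as no conjugate of $R_0$, $R_1$, $R_2$ belongs to $\Delta^+$, I conclude $s \varphi = 1$, that is $s \in \ker \varphi$. But $\ker \varphi = 1 \varphi^{-1} \subseteq H \varphi^{-1}$, so $s$---a conjugate of $R_1, R_2, {R_1}^{R_0}$ or ${R_2}^{R_0}$ in $\Delta^{\hat 0}$---would lie in the hypermap subgroup of $\varphi(\cH)$, contradicting the absence of boundary. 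Hence condition $(1)$ holds, and assembling the steps gives $\cB^+ \cong \varphi(\cH)^+ \cong \varphi(\cH^+)$.

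I do not expect a serious obstacle. The only points requiring care are the reduction in the first paragraph (that $(\cdot)^+$ respects the isomorphism $\cB \cong \varphi(\cH)$), and the recognition that it is the \emph{no-boundary} half of non-orientability---rather than non-orientability per se---that forces condition $(1)$, via the standing containment $\ker \varphi \subseteq H \varphi^{-1}$ together with the torsion statement of Lemma \ref{torsion}.
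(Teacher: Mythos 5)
Your proof is correct, and it rests on the same two pillars as the paper's: the observation that non-orientability includes having no boundary, and the equivalences of Theorem \ref{th_main}. The difference is the entry point into that theorem. The paper first applies part 1 of Theorem \ref{th_b_o_r} to conclude that $\cH$ itself has no boundary---this is what licenses taking $\cN = \cH$ in Theorem \ref{th_main}---and then reads condition (6) off from condition (4). You instead establish condition (1) directly, by combining $\ker \varphi = 1\varphi^{-1} \subseteq H \varphi^{-1}$ with part 1 of Lemma \ref{torsion}; this is essentially a re-derivation of the implication $(4 \Rightarrow 1)$ already contained in the proof of Theorem \ref{th_main} (indeed, the lemma even states outright that $g \in \Delta^+\varphi^{-1}$ iff $g \in \ker\varphi$ for the generators in question, so your contradiction argument could be shortened to a direct citation). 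What your route buys is that conditions (1) and (6) concern only $\varphi$ and $\cH$, so you never need to verify the standing hypothesis of Theorem \ref{th_main} on $\cN$, and Theorem \ref{th_b_o_r} is not needed at all; what the paper's route buys is brevity---two lines---by reusing the theorem's internal implications rather than unfolding them. Your preliminary reduction, that $\cB \cong \varphi(\cH)$ forces $\cB^+ \cong \varphi(\cH)^+$ because normality of $\Delta^+$ makes conjugate subgroups have conjugate intersections with $\Delta^+$, is also correct and is left implicit in the paper.
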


\begin{proof}
By Theorem \ref{th_b_o_r}, $\cH$ has no boundary. The proof now follows from Theorem \ref{th_main}.
\end{proof}

We now give several examples of epimorphisms from $\Delta^{\hat 0}$ to $\Delta$.

As noted in \cite{BredaDuarte2007}, the Reidemeister-Schreier rewriting process \cite{Johnson1980} can be used to show that
$$
\Delta^{\hat 0} = \langle R_1 \rangle * \langle R_2 \rangle * \langle {R_1}^{R_0} \rangle
* \langle {R_2}^{R_0} \rangle.
$$
Because of this, every epimorphism from $\Delta^{\hat 0}$ to $\Delta$ is completely determined by the images of $R_1$, $R_2$, ${R_1}^{R_0}$ and ${R_2}^{R_0}$. Let $\varphi_1, \varphi_2, \varphi_3, \varphi_4, \varphi_5$ be the epimorphisms from $\Delta^{\hat 0}$ to $\Delta$ whose images of $R_1$, $R_2$, ${R_1}^{R_0}$ and ${R_2}^{R_0}$ are listed in Table \ref{tabela_fi}. The construction induced by $\varphi_1$ is the well known Walsh's correspondence between hypermaps and bipartite maps \cite{Walsh1975}, and $\varphi_2$ induces a construction described in \cite{BredaDuarte2007,Rui2007}. Note that $\varphi_4 = (12) \varphi_3 (12)$.

\begin{table}[hhh]
$$
\begin{array}{|l||l|l|l|l|l|}
\hline g & g \varphi_1 & g \varphi_2 & g \varphi_3 & g \varphi_4 & g \varphi_5 \\
\hline %
\hline R_1 & R_1 & R_1 & R_1 & R_1 & R_1 \\
\hline R_2 & R_2 & R_2 & R_2 & R_2 & R_2 \\
\hline {R_1}^{R_0} & R_0 & R_0 & R_2 & R_0 & {R_1}^{R_0} \\
\hline {R_2}^{R_0} & R_2 & R_0 & R_0 & R_1 & R_0 \\
\hline
\end{array}
$$
\caption{The images of $R_1$, $R_2$, ${R_1}^{R_0}$ and ${R_2}^{R_0}$ by $\varphi_1$, $\varphi_2$, $\varphi_3$,
$\varphi_4$ and $\varphi_5$.} \label{tabela_fi}
\end{table}

Let $\varphi_i$ be one of these epimorphisms. Since $R_1, R_2, {R_1}^{R_0}, {R_2}^{R_0} \notin \Delta^+ {\varphi_i}^{-1}$, the construction induced by $\varphi_i$ has the following properties:
\begin{itemize}
\item both $\cH$ and $\varphi_i (\cH)$ are orientable or neither is,
\item both $\cH$ and $\varphi_i (\cH)$ have boundary or neither has,
\item $\varphi_i (\cH^+) \cong \varphi_i (\cH)^+$,
\item if $\cB$ is bipartite, then both $\cB$ and $\cB^+$ are constructed using $\varphi_i$ or neither is.
\end{itemize}

Although $\cH$, $\varphi_1 (\cH)$ and $\varphi_2 (\cH)$ all have the same underlying surface \cite{BredaDuarte2007}, in general $\cH$ and $\varphi (\cH)$ may not have the same Euler characteristic.
When $\cR$ is a regular hypermap of type $(\ell,m,n)$, $\varphi_3 (\cR)$ and $\varphi_4 (\cR)$ are bipartite-regular hypermaps of bipartite-type $(\ell, m; 2 \ell; 2m)$ and $(\ell, n; 2 \ell; 2n)$, and $\varphi_5 (\cR)$ is a bipartite-regular hypermap of bipartite-type $(\ell, n; 2m; n)$ if $n$ is
even, or $(\ell, n; 2m; 2n)$ if $n$ is odd. Using the Euler formula for bipartite-uniform hypermaps without boundary
(Lemma 19 of \cite{BredaDuarte2007}), we obtain:
$$
\chi (\varphi_3 (\cR)) = 2 \left ( \chi (\cR) - F (\cR) \right), \quad \chi (\varphi_4 (\cR)) = 2 \left ( \chi (\cR) -
E (\cR) \right)
$$
and
$$
\chi (\varphi_5 (\cR)) =
\begin{cases}
\chi (\cR) + 2 F (\cR) - \frac{|\cR|}{2} & \text{if $n$ is even} \\
\chi (\cR) + F (\cR) - \frac{|\cR|}{2} & \text{if $n$ is odd} \\
\end{cases}.
$$

Two interesting questions which naturally occur are whether all bipartite-regular hypermaps are obtained from regular hypermaps using these constructions, and whether there is a surface (without boundary) having no bipartite-regular hypermaps. The answers to both questions are ``no" as we will see in Sections \ref{section_other} and \ref{section_surfaces_with}.

\section{Some bipartite hypermaps} \label{section_other}

In this section we present bipartite hypermaps which are not obtained by the constructions described above.

Let $\cK$ be the bipartite-regular hypermap with hypermap subgroup
$$
K := \langle \{ (uv)^2 \mid u, v \in \{ R_1, R_2, {R_1}^{R_0}, {R_2}^{R_0} \} \} \cup \{ R_2 {R_2}^{R_0} {R_1}^{R_0} \} \rangle^{\Delta^{\hat 0}},
$$
One can see that $\cK$ is a uniform hypermap of type $(2,4,4)$ on the Klein bottle with 16 flags. Its orientable double cover $\cT := \cK^+$ is also bipartite-regular and uniform of type $(2,4,4)$, but is on the torus and has $32$ flags. A hypermap subgroup for $\cT$ is
$$
T := K \cap \Delta^+ = \langle \{ (uv)^2 \mid u, v \in \{ R_1, R_2, {R_1}^{R_0}, {R_2}^{R_0} \} \} \rangle^{\Delta^{\hat 0}}.
$$
In fact, $\cT$ is regular and the dual $\D_{(01)} (\cT)$ is the regular map
denoted by $\{ 4, 4 \}_{2,0}$ in 
\cite{CoxeterMoser1980}, by $\{ i \}_{2}$ in 
\cite{SingSydd2000}, and by $(4,2,4)_{\scriptsize {2 \ 0 \choose 0 \ 2}}$ in 
\cite{Rui2007}. Since $\cK$ is not regular, $\cT$ is the covering core of $\cK$, and hence $(\cK^+)^\Delta$ and
$(\cK^\Delta)^+$ are non-isomorphic.

When $\cB$ is a bipartite-regular hypermap such that $\cB \in \im \varphi$ and $B$ is a hypermap subgroup for $\cB$, then $\Delta^{\hat 0} / B \cong \Delta / B \varphi$, and so $\Delta^{\hat 0} / B$ is generated by up to 3 elements. Since no 3 elements generate $\Delta^{\hat 0} / T \cong C_2 \times C_2 \times C_2 \times C_2$, the hypermap $\cT$ cannot be constructed using an epimorphism $\varphi$. Corollary \ref{non-orientable_and_bipartite} ensures that neither $\cK$ can be constructed using $\varphi$. It also follows that $\Delta^{\hat 0} / B$ being generated by up to 3 elements is a necessary but not sufficient condition for $\cB$ being constructed using an epimorphism.

Let $\cP_2$ be the unique regular hypermap on the sphere of type $(2,2,2)$ whose hypermap subgroup is $\Delta' = \langle (R_1 R_2)^2, (R_2 R_0)^2, (R_0 R_1)^2 \rangle^\Delta$, the derived subgroup 
of $\Delta$. Then $\varphi_2 (\cP_2)$ is a bipartite-regular hypermap of bipartite-type $(1,2;4;4)$ and its covering core $\varphi_2 (\cP_2)_\Delta$ is $\cT$ (see \cite{BredaDuarte2007,Rui2007}), showing that the converse of 3 of Theorem \ref{th_constructed_by_phi} is not true.

\section{Surfaces with bipartite-regular hypermaps} \label{section_surfaces_with}

According to Theorems 10 and 12 of \cite{BredaDuarte2007}, the constructions induced by $\varphi_1$ and $\varphi_2$ guarantee the existence of bipartite-regular hypermaps on every surface which supports regular hypermaps. However, there are non-orientable surfaces having no regular hypermaps, such as the non-orientable surfaces of characteristic $0$, $-1$, $-16$, $-22$, $-25$, $-37$ and $-46$ (see \cite{BredaJones2001b,WilsonBreda2004}). Do these surfaces have bipartite-regular hypermaps? The answer is given by the following result. 

\begin{theorem}
All surfaces have bipartite-regular hypermaps.
\end{theorem}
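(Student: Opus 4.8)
The plan is to split all surfaces into those that already carry a regular hypermap and the finitely many that do not. For the first class nothing new is needed: as recalled at the start of this section (Theorems 10 and 12 of \cite{BredaDuarte2007}), the constructions $\varphi_1$ and $\varphi_2$ already furnish a bipartite-regular hypermap on every surface that supports a regular hypermap, because $\varphi_1(\cH)$ and $\varphi_2(\cH)$ share the underlying surface of $\cH$ and are bipartite-regular whenever $\cH$ is regular by Lemma \ref{lemma_varphi}. Invoking the classification of \cite{BredaJones2001b,WilsonBreda2004}, the only surfaces left to treat are the non-orientable surfaces of Euler characteristic $0$, $-1$, $-16$, $-22$, $-25$, $-37$ and $-46$.

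The characteristic-$0$ case, the Klein bottle, is already settled: the hypermap $\cK$ of Section \ref{section_other} is bipartite-regular and lies on the Klein bottle. It therefore remains to realise bipartite-regular hypermaps on the six non-orientable surfaces of characteristic $-1$, $-16$, $-22$, $-25$, $-37$ and $-46$.

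For these I would feed suitable regular hypermaps into the surface-changing constructions $\varphi_3$, $\varphi_4$ and $\varphi_5$. By Lemma \ref{lemma_varphi} each $\varphi_i(\cR)$ is bipartite-regular when $\cR$ is regular, and by Corollary \ref{cor_main} it is non-orientable precisely when $\cR$ is; so I would start from non-orientable regular hypermaps, which necessarily lie on surfaces off the forbidden list and are thus available. The displayed Euler-characteristic formulas then pin down the target surface: $\chi(\varphi_3(\cR))=2(\chi(\cR)-F(\cR))$ and $\chi(\varphi_4(\cR))=2(\chi(\cR)-E(\cR))$ are always even, and I would use them for the characteristics $-16$, $-22$ and $-46$, whereas $\chi(\varphi_5(\cR))=\chi(\cR)+F(\cR)-|\cR|/2$ (for face-valency $n$ odd) can be made odd and would cover $-1$, $-25$ and $-37$. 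For each of the six targets one exhibits a single explicit non-orientable regular input whose image has exactly the prescribed characteristic.

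The main obstacle is precisely this last step: producing, for each of the six characteristics, a concrete non-orientable regular hypermap — necessarily on a surface that does admit regular hypermaps — whose $\varphi_3$-, $\varphi_4$- or $\varphi_5$-image has the required Euler characteristic. I expect this to be a finite search rather than a structural difficulty: the candidate inputs can be located among the known non-orientable regular hypermaps (for instance in Conder's tables \cite{ConderWWW}), and once a suitable $\cR$ is fixed, Lemma \ref{lemma_varphi}, Corollary \ref{cor_main} and the Euler formulas together certify that $\varphi_i(\cR)$ is a bipartite-regular hypermap on the desired surface, completing the argument.
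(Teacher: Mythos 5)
Your proof has two genuine gaps, one structural and one of execution. First, your decomposition rests on a claim the paper never makes and that you cannot justify: that the non-orientable surfaces of characteristic $0$, $-1$, $-16$, $-22$, $-25$, $-37$ and $-46$ are the \emph{only} surfaces admitting no regular hypermaps. The paper's wording (``there are non-orientable surfaces having no regular hypermaps, \emph{such as} \ldots'') and the references \cite{BredaJones2001b,WilsonBreda2004} present these seven as known examples, not as a complete classification; so the collection of surfaces left untreated after your first step is not known to be finite, let alone equal to that list. Second, even granting that list, your proof is unfinished on its own terms: for the six characteristics $-1$, $-16$, $-22$, $-25$, $-37$, $-46$ you never exhibit the non-orientable regular hypermaps whose images under $\varphi_3$, $\varphi_4$ or $\varphi_5$ land on the prescribed surfaces --- you explicitly defer this ``main obstacle'' to a search you do not carry out. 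Since producing these witnesses is exactly the content of the theorem on the surfaces where nothing was previously known, what is missing is the core of the proof, not a routine verification.

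The paper's proof avoids both problems with a single uniform family, and it is worth seeing why that works. It takes $\cP\cP_{2k}$, the regular hypermap of type $(2,2,2k)$ on the projective plane (a surface that certainly supports regular hypermaps, so no classification is needed), and applies $\varphi_4$ and $\varphi_5$: Theorem \ref{th_main} makes both images non-orientable bipartite-regular hypermaps, and the Euler formulas give $\chi(\varphi_4(\cP\cP_{2k})) = 2(\chi(\cP\cP_{2k}) - E(\cP\cP_{2k})) = 2-2k$ and $\chi(\varphi_5(\cP\cP_{2k})) = 3-2k$, i.e.\ non-orientable genus $2k$ and $2k-1$. Letting $k$ range over all positive integers realises \emph{every} non-orientable surface at once, including your seven exceptional ones and the Klein bottle, with no case analysis. (Your intended search would in fact succeed using exactly this family: $k=9,12,24$ give $-16$, $-22$, $-46$ via $\varphi_4$, and $k=2,14,20$ give $-1$, $-25$, $-37$ via $\varphi_5$. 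Note also that your parity bookkeeping is slightly off: the paper obtains the odd characteristics from the $n$ \emph{even} case of $\varphi_5$, since $\chi(\cP\cP_{2k})=1$ is itself odd, whereas you planned to rely on the $n$ odd case.) Your handling of the orientable surfaces and of the Klein bottle via $\cK$ of Section \ref{section_other} is correct, but the remainder of the argument needs to be replaced by, or completed into, something like the paper's uniform construction.
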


\begin{proof}
The existence of bipartite-regular hypermaps on every orientable surface is a consequence of Theorem 23 of
\cite{BredaDuarte2007}.

Let $\cP \cP_{2k}$ be the regular hypermap on the projective plane of type $(2,2,2k)$, with $k$ vertices, $k$
edges, $1$ face and $4k$ flags. By Theorem \ref{th_main}, the bipartite-regular hypermaps $\varphi_4 (\cP \cP_{2k})$ and $\varphi_5 (\cP \cP_{2k})$ are both non-orientable.
Since
$$
\chi (\varphi_4 (\cP \cP_{2k})) = 2 - 2k
\qquad \text{and} \qquad
\chi (\varphi_5 (\cP \cP_{2k})) = 2 - (2k-1),
$$
the hypermaps $\varphi_4 (\cP \cP_{2k})$ and $\varphi_5 (\cP \cP_{2k})$ have genus $2k$ and $2k-1$, respectively.
\end{proof}


\bibliographystyle{amsplain}
\bibliography{bibliography}

\end{document}